\newtheorem{theorem}{Theorem}[section]
\newtheorem{lemma}[theorem]{Lemma}
\newtheorem{corollary}[theorem]{Corollary}
\theoremstyle{definition}
\newtheorem{definition}[theorem]{Definition}
\theoremstyle{remark}
\newtheorem{remark}[theorem]{Remark}
\numberwithin{equation}{section}
\newcommand{\des}{\ensuremath\mathrm{des}}
\newcommand{\R}{\ensuremath{R}}
\newcommand{\schubert}{\ensuremath\mathfrak{S}}
\newcommand{\fund}{\ensuremath\mathfrak{F}}
\newcommand{\stanley}{\ensuremath{S}}
\newlength\cellsize \setlength\cellsize{12\unitlength}
\newcommand\cellify[1]{\def\thearg{#1}\def\nothing{}%
\ifx\thearg\nothing\vrule width0pt height\cellsize depth0pt%
  \else\hbox to 0pt{\usebox2\hss}\fi%
  \vbox to 12\unitlength{\vss\hbox to 12\unitlength{\hss$#1$\hss}\vss}}
\newcommand\tableau[1]{\vtop{\let\\=\cr
\setlength\baselineskip{-12000pt}
\setlength\lineskiplimit{12000pt}
\setlength\lineskip{0pt}
\halign{&\cellify{##}\cr#1\crcr}}}
\newcommand\nocellify[1]{\def\thearg{#1}\def\nothing{}%
\ifx\thearg\nothing\vrule width0pt height\cellsize depth0pt%
  \else\hbox to 0pt{\hss}\fi%
  \vbox to 12\unitlength{\vss\hbox to 12\unitlength{\hss$#1$\hss}\vss}}
\newcommand\notableau[1]{\vtop{\let\\=\cr
\setlength\baselineskip{-12000pt}
\setlength\lineskiplimit{12000pt}
\setlength\lineskip{0pt}
\halign{&\nocellify{##}\cr#1\crcr}}}
\newcommand{\dia}[1]{\def\thearg{#1}\def\nothing{}%
\ifx\thearg\nothing\vrule width0pt height\cellsize depth0pt%
  \else\hbox to 0pt{\usebox3\hss}\fi%
  \vbox to 12\unitlength{\vss\hbox to 12\unitlength{\hss$#1$\hss}\vss}}
\newcommand{\cir}[1]{\def\thearg{#1}\def\nothing{}%
\ifx\thearg\nothing\vrule width0pt height\cellsize depth0pt%
  \else\hbox to 0pt{\usebox4\hss}\fi%
  \vbox to 12\unitlength{\vss\hbox to 12\unitlength{\hss$#1$\hss}\vss}}
\begin{document}


\title[Schubert polynomials times Stanley polynomials]{Multiplication of a Schubert polynomial \\ by a Stanley symmetric polynomial}  

\author{Sami Assaf}
\address{Department of Mathematics, University of Southern California, Los Angeles, CA 90089}
\email{shassaf@usc.edu}

\subjclass[2010]{%
  Primary 14N15; %
  Secondary 05A05, 05A15, 14N10}




\keywords{Schubert polynomials, reduced expressions, Stanley symmetric polynomials}

\begin{abstract}
  We prove, combinatorially, that the product of a Schubert polynomial by a Stanley symmetric polynomial is a truncated Schubert polynomial. Using Monk's rule, we derive a nonnegative combinatorial formula for the Schubert polynomial expansion of a truncated Schubert polynomial. Combining these results, we give a nonnegative combinatorial rule for the product of a Schubert and a Schur polynomial in the Schubert basis.
\end{abstract}

\maketitle

%
\section{Schubert Calculus}
%
\label{sec:introduction}

Schubert calculus began around 1879 with Herman Schubert asking, and in special cases answering, enumerative questions in geometry\cite{Sch79}. For example, how many lines in space meet four given lines? To answer this, Schubert considered the case where the first line intersects the second and the third intersects the fourth, in which case the answer is two: the line connecting the two points of intersection and the line of intersection of the two planes spanned by the two pairs of intersecting lines. He then asserted, by his \emph{principle of conservation of number}, that the general answer, if finite, must also be two. Cohomology theory made rigorous Schubert's principle of conservation of number and lead us to modern Schubert calculus and intersection theory, which has ramifications in geometry, topology, combinatorics, and even plays a central role in string theory.

Lascoux and Sch{\"u}tzenberger \cite{LS82} defined polynomial representatives for the Schubert classes in the cohomology ring of the complete flag variety. These \emph{Schubert polynomials} give explicit polynomial representations of the Schubert classes so that intersections numbers can be read off the structure constants. That is, the structure constants for Schubert polynomials, $c_{u,v}^{w}$, defined by
\begin{displaymath}
  \schubert_{u} \cdot \schubert_{v} = \sum_{w} c_{u,v}^{w} \schubert_{w},
\end{displaymath}
enumerate flags in a suitable triple intersection of Schubert varieties. Therefore these so-called \emph{Littlewood--Richardson coefficients} are known to be nonnegative. A fundamental open problem in Schubert calculus is to find a \emph{positive} combinatorial construction for $c_{u,v}^{w}$.

In the special case of the Grassmannian subvariety, Schubert polynomials are Schur polynomials \cite{LS82}, and there are many combinatorial rules for computing $c_{u,v}^{w}$ when $u$ and $v$ are both grassmannian with the same number of variables \cite{LR34}. Beyond this, Sottile \cite{Sot96} proved a Pieri formula for computing computing $c_{u,v}^{w}$ when $v$ is a grassmannian permutation whose corresponding partition has one row or one column. In this paper, we give a combinatorial rule for the Schubert expansion of the product of a Schubert polynomial and a Schur polynomial, thus computing $c_{u,v}^{w}$ when $v$ is grassmannian in at least as many variables as $u$. Our result is a special case of a more general rule for multiplying a Schubert polynomial by a Stanley symmetric polynomial. 

Our first main result may be stated as
\begin{displaymath}
  \schubert_{u \times v}(x_1,\ldots,x_k) = \schubert_u \stanley_{v}(x_1,\ldots,x_k),
\end{displaymath}
where $\stanley_w$ is the Stanley symmetric polynomial associated to $w$ \cite{Sta84}. This motivates understanding truncations of Schubert polynomials which we do by giving the following formula for the truncation of a Schubert polynomial by setting the last variable to $0$,
\begin{displaymath}
  \schubert_w(x_1,\ldots,x_{k-1},0) = \sum_{\substack{u = \hat{w} (a_1,k) \cdots (a_m,k+m-1) \\ a_i < k \hspace{1em} \mathrm{and} \hspace{1em} \ell(u) = \ell(w)}} \schubert_u,
\end{displaymath}
where $k$ is the final descent of $w$, $k+m = \max\{i \mid w_k > w_i\}$, and $\hat{w} = w_1 \cdots w_{k-1} w_{k+1} \cdots w_{k+m} w_k$. Finally, we combine these results to show that for $v(\lambda,k)$ the grassmannian permutation associated to $\lambda$ and $k$,where $k$ is at least the final descent position for $w$, $c^{w}_{u,v(\lambda,k)}$ enumerates certain chains $w = u (a_1,b_1) \cdots (a_{|\lambda|},b_{|\lambda|})$ where $a_i \leq k < b_i$. In particular, this gives a purely combinatorial proof that $c^{w}_{u,v(\lambda,k)}$ is a nonnegative integer.

%
\section{Schubert polynomials}
%
\label{sec:schubert}

A \emph{reduced expression} is a sequence $\rho = (i_k, \ldots, i_1)$ such that the permutation $s_{i_k} \cdots s_{i_1}$ has length $k$, where $s_i$ is the simple transposition that interchanges $i$ and $i+1$. Let $\R(w)$ denote the set of reduced expressions for $w$. For example, the elements of $\R(42153)$ are shown below:

  \begin{displaymath}
    \begin{array}{cccccc}
      (4,2,1,2,3) & (4,1,2,1,3) & (4,1,2,3,1) & (2,4,1,2,3) & (2,1,4,2,3) & (2,1,2,4,3) \\
      (1,4,2,3,1) & (1,2,4,3,1) & (1,4,2,1,3) & (1,2,4,1,3) & (1,2,1,4,3) & 
    \end{array}
  \end{displaymath}

For $\rho \in R(w)$, say that a strong composition $\alpha$ is \emph{$\rho$-compatible} if $\alpha$ is weakly increasing with $\alpha_j < \alpha_{j+1}$ whenever $\rho_j < \rho_{j+1}$ and $\alpha_j \leq \rho_j$. For example, there are two compatible sequences for $(4,2,1,2,3)$, namely $(1,1,1,2,4)$ and $(1,1,1,2,3)$, and there is one compatible sequence for $(2,4,1,2,3)$, namely $(1,1,1,2,2)$. None of the other reduced expressions for $42153$ has a compatible sequence.

\begin{definition}[\cite{BJS93}]
  The Schubert polynomial $\schubert_w$ is given by
  \begin{equation}
    \schubert_w = \sum_{\rho \in R(w)} \sum_{\alpha \ \rho-\mathrm{compatible}} x_{\alpha_1} \cdots x_{\alpha_{\ell(w)}},
  \end{equation}
  where the sum is over compatible sequences $\alpha$ for reduced expressions $\rho$.
  \label{def:schubert}
\end{definition}

For example, we can compute
\begin{displaymath}
  \schubert_{42153} = x_1^3 x_2 x_4 + x_1^3 x_2 x_3 + x_1^3 x_2^2.
\end{displaymath}

Let $1^m \times w$ denote the permutation obtained by adding $m$ to all values of $w$ in one-line notation and pre-pending $1,2,\ldots,m$. Note that the reduced expressions for $1^m \times w$ are simply those for $w$ with each index increased by $m$. To make the example more interesting, consider $1 \times 42153 = 153264$. Then seven reduced expressions contribute to the Schubert polynomial, giving
\begin{eqnarray*}
  \schubert_{153264} & = & x_1^3 x_2^2+2 x_1^3 x_2 x_3+x_1^3 x_2 x_4+x_1^3 x_2 x_5+x_1^3 x_3^2+x_1^3 x_3 x_4+x_1^3 x_3 x_5+x_1^2 x_2^3+2 x_1^2 x_2^2 x_3 \\
  & & + x_1^2 x_2^2 x_4+x_1^2 x_2^2 x_5+x_1^2 x_2 x_3^2+x_1^2 x_2 x_3 x_4+x_1^2 x_2 x_3 x_5+2 x_1 x_2^3 x_3+x_1 x_2^3 x_4 \\
  & & +x_1 x_2^3 x_5 + x_1 x_2^2 x_3^2+x_1 x_2^2 x_3 x_4+x_1 x_2^2 x_3 x_5+x_2^3 x_3^2+x_2^3 x_3 x_4+x_2^3 x_3 x_5 .
\end{eqnarray*}

We harness the power of the \emph{fundamental slide polynomials} of Assaf and Searles \cite{AS17} to give a condensed formula for Schubert polynomials. Given a weak composition $a$, let $\mathrm{flat}(a)$ denote the strong composition obtained by removing all zero parts.

\begin{definition}[\cite{AS17}]
  For a weak composition $a$ of length $n$, define the \emph{fundamental slide polynomial} $\fund_{a} = \fund_{a}(x_1,\ldots,x_n)$ by
  \begin{equation}
    \fund_{a} = \sum_{\substack{b \geq a \\ \mathrm{flat}(b) \ \mathrm{refines} \ \mathrm{flat}(a)}} x_1^{b_1} \cdots x_n^{b_n},
    \label{e:fund-shift}
  \end{equation}
  where $b \geq a$ means $b_1 + \cdots + b_k \geq a_1 + \cdots + a_k$ for all $k=1,\ldots,n$.
  \label{def:fund-shift}
\end{definition}

For example, we compute
\begin{eqnarray*}
  \fund_{(0,3,1,0,1)} & = & x_2^3 x_3 x_5 + x_2^3 x_3 x_4 + x_1 x_2^2 x_3 x_5 + x_1 x_2^2 x_3 x_4 + x_1^2x_2 x_3 x_5 \\
  & & + x_1^2x_2 x_3 x_4 + x_1^3 x_3 x_5 + x_1^3 x_3 x_4 + x_1^3 x_2 x_5 + x_1^3 x_2 x_4 + x_1^3 x_2 x_3.
\end{eqnarray*}

To facilitate virtual objects as defined below, we extend notation and set
\begin{equation}
  \fund_{\varnothing} = 0.
\end{equation}

The \emph{run decomposition} of a reduced expression $\rho$ partitions $\rho$ into increasing sequences of maximal length. We denote the run decomposition by $(\rho^{(k)} | \cdots | \rho^{(1)})$. For example, the run decomposition of $(5,6,3,4,5,7,3,1,4,2,3,6)$, a reduced expression for $41758236$, is $(5,6|3,4,5,7|3|1,4|2,3,6)$. 

\begin{definition}
  For a reduced expression $\rho$ with run decomposition $(\rho^{(k)} | \cdots | \rho^{(1)})$, set $r_k = \rho^{(k)}_1$ and, for $i<k$, set $r_i = \min(\rho^{(i)}_1,r_{i+1}-1)$. Define the \emph{weak descent composition of $\rho$}, denoted by $\des(\rho)$, by $\des(\rho)_{r_i} = |\rho^{(i)}|$ and all other parts are zero if all $r_i>0$ and $\des(\rho) = \varnothing$ otherwise.
  \label{def:des-red}
\end{definition}

We say that $\rho$ is \emph{virtual} if $\des(\rho) = \varnothing$. For example, $(5,6,3,4,5,7,3,1,4,2,3,6)$ is virtual since $r_1=0$. Let $0^m \times a$ denote the weak composition obtained by pre-pending $m$ zeros to $a$. Then for $\rho \in R(w)$ non-virtual, the corresponding reduced expression for $R(1^m \times w)$ will have weak descent composition $0^m \times \des(\rho)$. For example, the weak descent composition for $(6,7,4,5,6,8,4,2,5,3,4,7)$, a reduced expression for $152869347$, is $(3,2,1,4,0,2)$. Note the reversal from the run decomposition to the descent composition. To revisit the previous example, the non-virtual reduced expressions for $153264$ are given below:
\begin{displaymath}
  \begin{array}{ccccccc}
    (5,3,2,3,4) & (5,2,3,2,4) & (5,2,3,4,2) & (3,5,2,3,4) & (3,2,5,3,4) & (3,2,3,5,4) & (2,3,5,2,4) 
  \end{array}
\end{displaymath}

\begin{theorem}
  For $w$ any permutation, we have
  \begin{equation}
    \schubert_{w} = \sum_{\rho \in \R(w)} \fund_{\des(P)},
    \label{e:schubert-slide}
  \end{equation}
  where the sum may be taken over non-virtual reduced expressions $\rho$.
  \label{thm:schubert-slide}
\end{theorem}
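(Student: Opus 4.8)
The plan is to prove Theorem~\ref{thm:schubert-slide} by showing that the fundamental slide expansion on the right-hand side is merely a repackaging of the monomials appearing in the Billey--Jockusch--Stanley formula of Definition~\ref{def:schubert}. The key observation is that both sides are sums of monomials $x_{\alpha_1}\cdots x_{\alpha_{\ell(w)}}$ indexed by reduced expressions together with compatible data; my task is to match, for each reduced expression $\rho$, the collection of $\rho$-compatible sequences with the monomials in the single fundamental slide polynomial $\fund_{\des(\rho)}$. Thus I would first fix a non-virtual $\rho \in \R(w)$ and carefully analyze the set of $\rho$-compatible sequences $\alpha$.

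\begin{proof}[Proof sketch]
First I would unpack the combinatorics of $\rho$-compatibility in terms of the run decomposition $(\rho^{(k)}|\cdots|\rho^{(1)})$. A $\rho$-compatible sequence $\alpha$ is weakly increasing and must strictly increase at each ascent of $\rho$, i.e.\ at each boundary between consecutive runs, with the constraint $\alpha_j \le \rho_j$. The crucial point is that $\alpha$ is constant on each run $\rho^{(i)}$ (since within a run $\rho$ is increasing, a strict increase is \emph{forced} only at run boundaries, but the cap $\alpha_j\le\rho_j$ together with weak monotonicity pins the value down to a single value per run once we track the running maximum). I would show that the common value of $\alpha$ on run $\rho^{(i)}$ can be any integer $c_i$ subject to $c_k < c_{k-1} < \cdots < c_1$ and $c_i \le r_i$, where $r_i$ is exactly the quantity defined in Definition~\ref{def:des-red}. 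This identifies each compatible $\alpha$ with a strictly decreasing sequence of column indices bounded above by the $r_i$.

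Next I would translate this into the monomial content. The monomial contributed by $\alpha$ is $\prod_i x_{c_i}^{|\rho^{(i)}|}$, so it is determined by choosing, for each run, a column $c_i \le r_i$ with the $c_i$ strictly decreasing in $i$. On the other side, the weak descent composition $\des(\rho)$ records exponent $|\rho^{(i)}|$ in position $r_i$, and the fundamental slide polynomial $\fund_{\des(\rho)}$ by Definition~\ref{def:fund-shift} sums exactly over all weak compositions $b \ge \des(\rho)$ whose flattening refines that of $\des(\rho)$. I would verify that the dominance condition $b \ge \des(\rho)$ together with the refinement condition encodes precisely the requirement that each block of size $|\rho^{(i)}|$ slides leftward from column $r_i$ to some column $c_i \le r_i$ while keeping the blocks in the same strictly decreasing order; the refinement-of-flattening condition guarantees the block boundaries are preserved and no two blocks merge. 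Hence the monomials of $\fund_{\des(\rho)}$ are in bijection with the $\rho$-compatible sequences, giving $\sum_{\alpha\ \rho\text{-compatible}} x_{\alpha_1}\cdots x_{\alpha_{\ell(w)}} = \fund_{\des(\rho)}$.

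Finally I would handle the virtual case and the reduction to non-virtual expressions. When some $r_i \le 0$ the forced chain $c_k<\cdots<c_1$ with $c_i\le r_i$ has no positive solution, so $\rho$ contributes no monomials to the Billey--Jockusch--Stanley sum; this matches the convention $\fund_{\varnothing}=0$ for $\des(\rho)=\varnothing$, so virtual expressions may be dropped from both sides simultaneously. Summing the per-$\rho$ identity over all $\rho\in\R(w)$ then yields \eqref{e:schubert-slide}. The main obstacle I anticipate is the second step: rigorously establishing that the ``$b\ge\des(\rho)$ and $\flatten(b)$ refines $\flatten(\des(\rho))$'' conditions are exactly equivalent to the strictly-decreasing bounded column assignments $c_i\le r_i$. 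This requires care because the slide polynomial's dominance order operates on partial sums of the full weak composition, whereas the compatibility condition is phrased run-by-run; reconciling these demands a precise bookkeeping of how the positions $r_i$ interleave with the zero parts of $\des(\rho)$.
\end{proof}
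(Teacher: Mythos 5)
Your reduction of the theorem to a per-expression identity $\sum_{\alpha\ \rho\text{-compatible}} x_{\alpha_1}\cdots x_{\alpha_{\ell(w)}} = \fund_{\des(\rho)}$ is the right frame, and it is the frame the paper itself uses; but your key structural claim --- that a $\rho$-compatible sequence must be \emph{constant} on each run, so that compatible sequences biject with monotone column choices $c_i\le r_i$, one per run --- is false. The cap $\alpha_j\le\rho_j$ does not pin down a single value per run: read in the direction in which $\alpha$ weakly increases, the caps within a run are \emph{decreasing}, so $\alpha$ may strictly increase inside a run as long as it stays below the run's smallest letter. A counterexample sits inside the paper's own running example: for $\rho=(5,3,2,3,4)\in\R(153264)$, whose weak descent composition is $(0,3,1,0,1)$, the sequence $\alpha=(1,2,2,3,5)$ is $\rho$-compatible (caps $4,3,2,3,5$ in the paper's indexing) and takes the non-constant values $1,2,2$ on the run $(2,3,4)$; it contributes $x_1x_2^2x_3x_5$, a monomial visibly present in the paper's expansion of $\fund_{(0,3,1,0,1)}$ but invisible to your model. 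The same error occurs, in parallel, on the slide side: you assert that ``$b\ge\des(\rho)$ and $\flatten(b)$ refines $\flatten(\des(\rho))$'' forces each block $|\rho^{(i)}|$ to stay intact, but refinement in Definition~\ref{def:fund-shift} allows a part of $\des(\rho)$ to \emph{split} into several parts of $b$ --- which is exactly what produces $x_1x_2^2x_3x_5$ above. So the equivalence you flag as your ``main obstacle'' is not merely delicate; it is false as stated, and the proof collapses there. (There is also an orientation slip: with the paper's conventions $r_1<\cdots<r_k$ and the run $\rho^{(k)}$ carries the \emph{largest} values, so your chain $c_k<\cdots<c_1$ with $c_i\le r_i$ is unsatisfiable even for the greedy choice $c_i=r_i$.)

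The two errors are parallel, and that indicates the repair, which is precisely the paper's argument: send a compatible sequence $\alpha$ to the weak composition $b$ with $b_i=\#\{j \mid \alpha_j=i\}$. The greedy (entrywise maximal) compatible sequence maps exactly to $\des(\rho)$, and every other compatible sequence is obtained from the greedy one by decrementing entries subject to the strictness constraints, which on the exponent side is sliding parts of $\des(\rho)$ to the left \emph{with parts allowed to break into refined pieces}; non-constant runs correspond exactly to the refinement terms in \eqref{e:fund-shift}. Your treatment of virtual $\rho$ does survive this repair --- if any compatible sequence exists, then taking its value at the last position of each run produces a constant-on-runs one, so virtual is indeed equivalent to having no compatible sequences, matching $\fund_{\varnothing}=0$ --- but the central bijection must be restated as above before the summation over $\rho\in\R(w)$ yields \eqref{e:schubert-slide}.
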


\begin{proof}
  Map each compatible sequence $\alpha$ to the weak composition $a$ whose $i$th part is the number of $j$ such that $\alpha_j = i$. For example, the compatible sequence $(1,1,1,2,4)$ for the reduced expression $(4,2,1,2,3)$ maps to the weak composition $(3,1,0,1)$. The greedy choice of a compatible sequence takes each $\alpha_i$ as large as possible. Under the correspondence, this precisely becomes $\des(\rho)$ since the condition $\alpha_j < \alpha_{j+1}$ whenever $\rho_j < \rho_{j+1}$ corresponds precisely to taking $r_i < r_{i+1}$ and the conditions $\alpha_j \leq \rho_j$ and $\alpha_j \leq \alpha_{j+1}$ correspond precisely to $r_i \leq \rho_1^{(i)}$. Furthermore, $\des(\rho) = \varnothing$ precisely when $\rho$ admits no compatible sequences.

  Given a compatible sequence for $\rho$, we may decrement parts provided we maintain $\alpha_j < \alpha_{j+1}$ whenever $\rho_j < \rho_{j+1}$, and this corresponds precisely to sliding parts of the weak composition left, possibly breaking them into refined pieces. Every compatible sequence may be obtained from the greedy one in this way, just as every term in the monomial expansion of the fundamental slide polynomial arises in the analogous way. 
\end{proof}

For example, the two non-virtual reduced expressions for $42153$ give
\begin{displaymath}
  \schubert_{42153} = \fund_{(3,1,0,1)} + \fund_{(3,2,0,0)},
\end{displaymath}
a slight savings over the monomial expansion. Bumping this example up to $153264$, we have
\begin{displaymath}
  \schubert_{153264} = \fund_{(0,3,1,0,1)} + \fund_{(2,2,0,0,1)} + \fund_{(1,3,0,0,1)} + \fund_{(0,3,2,0,0)} + \fund_{(2,2,1,0,0)} + \fund_{(1,3,1,0,0)} + \fund_{(2,3,0,0,0)},
\end{displaymath}
which is considerably more compact than the $26$-term monomial expansion. Furthermore, this paradigm shift to fundamental slide generating functions facilitates the applications to follow.

%
\section{Stanley symmetric polynomials}
%
\label{sec:stanley}

In order to enumerate reduced expressions, Stanley introduced a new family of symmetric functions \cite{Sta84}. They may be defined by
\begin{equation}
   \stanley_w(x_1,x_2,\ldots)  = \sum_{\rho \in \R(w)} F_{\mathrm{Des}(\rho)}(x_1,x_2,\ldots),
  \label{e:stanley}
\end{equation}
where $\mathrm{Des}(\rho) = (|\rho^{(1)}|,\ldots,|\rho^{(k)}|)$ is the strong descent composition for $\rho$, and $F_{\alpha}$ denotes Gessel's \emph{fundamental quasisymmetric function} \cite{Ges84} defined by
\begin{equation}
  F_{\alpha}(x_1,x_2,\ldots) = \sum_{\mathrm{flat}(b) \ \mathrm{refines} \ \alpha} x^{b}.
  \label{e:gessel}
\end{equation}

For example, all eleven elements of $\R(42153)$ contribute to the Stanley function, and we have
\begin{displaymath}
  \stanley_{42153} = F_{(3,1,1)} + 2 F_{(2,2,1)} + 2 F_{(1,3,1)} + F_{(3,2)}  + 2 F_{(1,2,2)} + F_{(1,1,3)} + F_{(2,1,2)} + F_{(2,3)} .
\end{displaymath}

Note that $\stanley_w$ and $F_{\alpha}$ are defined as \emph{functions}, though we can make them polynomials in $k$ variables by setting $x_i=0$ for all $i>k$.

\begin{lemma}
  Let $a$ be a weak composition, say $(a_1,\ldots,a_{\ell})$. Suppose there exist $1 \leq r\leq s \leq \ell$ such that $a_i = 0$ for $i<r$ and $a_i \neq 0$ for $r \leq i \leq s$. Then for any $k \leq s$, we have 
  \begin{equation}
    \fund_{a} (x_1,\ldots,x_k) = F_{\mathrm{flat}(a)}(x_1,\ldots,x_k).
    \label{e:fund-trunc}
  \end{equation}
  \label{lem:fund-trunc}
\end{lemma}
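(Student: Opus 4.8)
The plan is to compare the two sides monomial-by-monomial. Evaluating a polynomial at $(x_1,\ldots,x_k)$ simply discards every monomial $x^b$ for which some $b_i$ with $i>k$ is positive, so both $\fund_a(x_1,\ldots,x_k)$ and $F_{\flatten(a)}(x_1,\ldots,x_k)$ are multiplicity-free sums of the monomials $x^b$ indexed by weak compositions $b$ supported on $\{1,\ldots,k\}$ with $\flatten(b)$ refining $\flatten(a)$. Comparing Definition~\ref{def:fund-shift} with \eqref{e:gessel}, the index set for $F_{\flatten(a)}$ is cut out by the single refinement condition, whereas that for $\fund_a$ carries the additional dominance condition $b \geq a$. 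Since the former index set visibly contains the latter, the lemma reduces to showing the reverse inclusion: every $b$ supported on $\{1,\ldots,k\}$ with $\flatten(b)$ refining $\flatten(a)$ already satisfies $b \geq a$.

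To prove this I would verify the partial-sum inequality $b_1+\cdots+b_j \geq a_1+\cdots+a_j$ for each $j$, writing $A_j$ and $B_j$ for the two partial sums. Refinement forces $|b|=|a|$, so the inequality is immediate in two ranges: for $j<r$ we have $A_j=0$, and for $j\geq k$ we have $B_j=|b|=|a|\geq A_j$ since $b$ is supported on the first $k$ coordinates. The only substantive range is $r\leq j<k$, and here I would translate everything into partial sums of the flattenings. Because $j<k\leq s$, the entries $a_r,\ldots,a_j$ are all nonzero, so $A_j$ equals the $(j-r+1)$-st partial sum of $\flatten(a)$; likewise $B_j$ equals the $c$-th partial sum of $\flatten(b)$, where $c$ is the number of nonzero entries of $b$ among its first $j$ coordinates.

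The key step is then a counting inequality. Because $\flatten(b)$ refines $\flatten(a)$, the quantity $A_j$ is also a partial sum of $\flatten(b)$, say its $t$-th; as all parts of $\flatten(b)$ are positive, $B_j\geq A_j$ is equivalent to $c\geq t$. I would bound the two sides against the combinatorial data: packing the nonzero entries of $b$ as far right as possible gives $c\geq N-(k-j)$, where $N$ is the total number of nonzero entries of $b$, while the refinement writes the last $p-(j-r+1)$ parts of $\flatten(a)$ (with $p$ the length of $\flatten(a)$) as disjoint nonempty blocks of $\flatten(b)$-parts, giving $N-t\geq p-(j-r+1)$. Combining these, $c\geq t$ follows once $k\leq p+r-1$, and the hypothesis $k\leq s$ supplies exactly this: the nonzero run $a_r,\ldots,a_s$ shows $p\geq s-r+1$, whence $p+r-1\geq s\geq k$. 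I expect this last bookkeeping—correctly locating $A_j$ inside the refinement and converting the support constraint $k\leq s$ into the inequality $k\leq p+r-1$—to be the main obstacle, while the remaining verifications are routine.
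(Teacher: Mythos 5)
Your proposal is correct and follows essentially the same route as the paper: both reduce the identity to showing that every $b$ supported on $\{1,\ldots,k\}$ with $\flatten(b)$ refining $\flatten(a)$ automatically satisfies $b \geq a$, and both establish the substantive range of partial sums by counting nonzero parts against the hypothesis $k \leq s$. The only difference is cosmetic: the paper bounds suffix sums of $b$ by suffix sums of $a$ and then complements using $|b| = |a|$, whereas you compare prefix-sum indices within $\flatten(b)$ directly.
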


\begin{proof}
  It is enough to show that for $b = (b_1,\ldots,b_k)$ such that $\mathrm{flat}(b)$ refines $\mathrm{flat}(a)$, we have $b \geq a$. If $\beta = (\beta_1,\ldots,\beta_n)$ refines $\alpha = (\alpha_1,\ldots,\alpha_m)$, then $n \geq m$ and for any $i < m$ we have
  \begin{displaymath}
    \beta_{n-i} + \cdots + \beta_{n} \leq \alpha_{m-i} + \cdots + \alpha_{m}.
  \end{displaymath}
  Therefore, since $k\leq s$, for any $r\leq i\leq k$ we have 
  \[ b_{i+1} + \cdots + b_{k} \leq a_{i+1} + \cdots + a_{\ell} \]
  since the left sum has at most $k-i$ nonzero terms and the right sum has at least $s-i$ nonzero terms. Since both compositions add to the same value, we must have
  \[ b_1 + \cdots + b_i \geq a_1 + \cdots + a_i. \]
  Finally, for any $i<r$, the right sum above is $0$, so the relation $b\geq a$ holds.
\end{proof}

Given permutations $u$ and $v$ and a positive integer $m$ such that $u_i = i$ for all $i>m$, define the permutation $u \times_m v$ by $(u \times_m v)_i = u_i$ for $i=1\ldots m$ and $(u \times_m v)_i = m+v_i$ for $i>m$. Comparing with our prior notation, we have $1^k\times w = 1 \times_k w$. For notational simplicity, we drop the subscript $m$ whenever it is taken to be minimal.

\begin{theorem}
  Let $u,v$ permutations with $m$ minimal such that $u_i=i$ for all $i>m$. For integers $k,n$ with $m \leq k \leq n$, we have
  \begin{equation}
    \schubert_{u \times_n v}(x_1,\ldots,x_k) = \schubert_u \stanley_{v} (x_1,\ldots,x_k).
    \label{e:cross}
  \end{equation}
  \label{thm:cross}
\end{theorem}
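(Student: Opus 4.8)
The plan is to expand both sides in monomials and build an explicit weight-preserving bijection, using a shuffle decomposition of reduced words as the structural backbone. Write $\tilde v = 1^n \times v$, so that $u \times_n v$ is the commuting product $u \cdot \tilde v$: the factor $u$ moves only positions $\leq m$ and uses only the simple reflections $s_1, \ldots, s_{m-1}$, while $\tilde v$ moves only positions $> n$ and uses only $s_{n+1}, s_{n+2}, \ldots$. Since $m \leq n$, these two sets of reflections commute elementwise (their indices differ by at least two), so $u$ and $\tilde v$ sit in complementary factors of a direct product of symmetric groups. Hence $\ell(u \times_n v) = \ell(u) + \ell(v)$ and $\R(u \times_n v)$ consists exactly of the shuffles of a reduced word $\rho \in \R(u)$, all of whose letters are $\leq m-1$, with a reduced word $\tau$ for $\tilde v$, all of whose letters are $\geq n+1$; moreover $\tau$ is the shift by $n$ of a unique $\tau' \in \R(v)$. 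I would record this first, since everything downstream rests on the fact that every $u$-letter is smaller than every $\tilde v$-letter, with a gap of at least two.

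Next I would expand $\schubert_{u \times_n v}(x_1, \ldots, x_k)$ as the sum of $x^{\alpha}$ over pairs $(\sigma, \alpha)$, where $\sigma \in \R(u \times_n v)$ and $\alpha$ is a compatible sequence for $\sigma$ (Definition~\ref{def:schubert}) with all entries at most $k$. Given such a pair, split $\sigma$ into its $u$-subword $\rho$ and $\tilde v$-subword $\tau$, and split $\alpha$ into the subsequences $\beta$ and $\gamma$ read off at the $\rho$- and $\tau$-positions, so that $x^{\alpha} = x^{\beta} x^{\gamma}$. The crux of the hypotheses enters here: at a $\tau$-position the governing letter is $\geq n+1 > k$, so the compatibility upper bound there is automatically met by any entry $\leq k$, and $\gamma$ is then constrained only by monotonicity and the strictness conditions, which are exactly the conditions defining the monomials of the Gessel function $F_{\mathrm{Des}(\tau')}$. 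Restricting to entries $\leq k$, the $\tau$-part ranges over the monomials of $\stanley_v(x_1,\ldots,x_k) = \sum_{\tau' \in \R(v)} F_{\mathrm{Des}(\tau')}(x_1, \ldots, x_k)$; this is precisely where $k \leq n$ is used, passing from the bounded Schubert rule to the unbounded Stanley rule. On the other side every $\rho$-letter is $\leq m-1 < k$, so $\beta$ is a genuine compatible sequence for $\rho$ with entries automatically below $k$, and the $\rho$-part ranges over the monomials of $\schubert_u$; since $u$ fixes everything past position $m$, $\schubert_u$ lies in $\mathbb{Z}[x_1, \ldots, x_{m-1}]$, and as $k \geq m$ its truncation to $k$ variables is all of $\schubert_u$.

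It remains to show the splitting is a bijection onto pairs $\bigl((\rho,\beta),(\tau',\gamma)\bigr)$, and this is the step I expect to be the main obstacle. The reverse map must reconstruct a unique shuffle $\sigma$ together with a compatible $\alpha$ from independent compatible sequences $\beta$ and $\gamma$. The sequence $\alpha$ is forced to be the weakly increasing merge of $\beta$ and $\gamma$, so the weight is automatic; the subtlety is that the interleaving — equivalently the shuffle — is not free but is pinned down by the strictness conditions. The decisive point is that within any block of equal $\alpha$-entries the large-letter ($\tilde v$) positions must precede the small-letter ($u$) positions, since a small letter adjacent to a larger one across two equal entries violates the forced strict increase at that step. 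This rule makes the interleaving unique, and I would then verify that the resulting $\alpha$ satisfies every compatibility condition for $\sigma$ (monotonicity is clear, the upper bounds hold because they hold on each part, and the strictness at a cross-block step is supplied by the large letter while strictness within a part is inherited), and that applying the forward map recovers the original data. Summing over all pairs yields $\schubert_{u \times_n v}(x_1, \ldots, x_k) = \schubert_u \, \stanley_v(x_1, \ldots, x_k)$, which is \eqref{e:cross}.

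Finally, I note the same argument can be organized through the fundamental slide machinery: by Theorem~\ref{thm:schubert-slide} one has $\schubert_{u \times_n v} = \sum_{\sigma} \fund_{\des(\sigma)}$, and grouping $\sigma$ by the pair $(\rho, \tau')$ and summing $\fund_{\des(\sigma)}(x_1, \ldots, x_k)$ over shuffles should produce $\fund_{\des(\rho)}$ times $F_{\mathrm{Des}(\tau')}(x_1, \ldots, x_k)$, the latter conversion being exactly Lemma~\ref{lem:fund-trunc} applied to the large-letter part (whose entries all exceed $k$). I expect the direct compatible-sequence bijection to be cleaner than tracking the run decomposition of $\sigma$ across all shuffles, which is why I would take it as the primary route; in either approach the main labor is the shuffle bookkeeping around the single step that separates the small and large letters.
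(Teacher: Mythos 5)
Your proof is correct, and although it opens with the same structural observation as the paper---that every letter occurring in $\R(u)$ commutes with every letter occurring in $\R(1^n\times v)$, so $\R(u \times_n v)$ is exactly the set of shuffles $\R(u) \shuffle \R(1^n \times v)$---the analytical core of your argument is genuinely different. The paper works at the level of slide polynomials: it invokes the Assaf--Searles slide product theorem to obtain the untruncated identity $\schubert_{u \times_n v} = \schubert_u \schubert_{1^n \times v}$, and then reduces \eqref{e:cross} to the truncation identity \eqref{e:1down}, namely $\schubert_{1^n \times w}(x_1,\ldots,x_k) = \stanley_w(x_1,\ldots,x_k)$ for $n \geq k$, which it proves via Lemma~\ref{lem:fund-trunc} together with a case analysis on virtual versus non-virtual reduced expressions (the shift by $n$ turns every virtual word into one whose weak descent composition truncates to the correct Gessel function). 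You instead work one level down, directly with the compatible-sequence expansion of Definition~\ref{def:schubert}, and build a weight-preserving bijection between pairs $(\sigma,\alpha)$, with $\alpha$ a $\sigma$-compatible sequence having entries at most $k$, and pairs $\bigl((\rho,\beta),(\tau',\gamma)\bigr)$. Your identification of the crux is exactly right: the interleaving is not free, and your tie-breaking rule---within a block of equal $\alpha$-values all large letters must precede all small letters, since a small letter immediately followed by a large one forces a strict increase---is precisely what makes the splitting bijective, and your check that the merged sequence is compatible (the bound at large-letter positions is vacuous because those letters exceed $k$, which is where $k\leq n$ enters) is the correct one. The one point you should spell out symmetrically is the forward direction: when consecutive letters of $\rho$ (or of $\tau$) are separated in $\sigma$ by letters of the other kind, strictness at an ascent of $\rho$ still transfers to $\beta$ via the chain $\beta_j < \gamma_i \leq \cdots \leq \beta_{j+1}$ forced by the ascent into the first intervening large letter; this is the same mechanism you already use in reverse. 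In exchange for this bookkeeping your route is self-contained---it needs neither the slide product theorem nor the paper's virtual-word analysis---whereas the paper's route, given the slide machinery it has already set up, is shorter and yields the stronger untruncated product formula $\schubert_{u\times_n v} = \schubert_u\schubert_{1^n \times v}$ and the standalone identity \eqref{e:1down} as reusable intermediate results.
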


\begin{proof}
  For $u$ and $v$ arbitrary permutations with $m$ the minimal index for which $u_i = i$ for all $i>m$, the largest index $i$ for which $s_i$ occurs in an element of $\R(u)$ is at most $m-1$ and the smallest index $j$ for which $s_j$ occurs in an element of $\R(1^n \times v)$ is least $n+1 \geq m+1$. Therefore $s_i$ and $s_j$ commute for $s_i$ any term in an element of $\R(u)$ and $s_j$ any term in an element of $\R(1^n \times v)$. In particular, we have a simple bijection $\R(u \times_n v) \rightarrow \R(u) \shuffle \R(1^n \times v)$, where $U\shuffle V$ denotes the usual shuffle of words $U$ and $V$ that interleaves letters in all possible ways while maintaining the relative order of letters from $U$ and of letters from $V$.

  Assaf and Searles \cite{AS17} generalized the shuffle product of Eilenberg and Mac Lane \cite{EM53} to the \emph{slide product} for weak compositions and showed that
  \begin{equation}
    \fund_a \fund_b = \sum_{c} [c \mid  \in a \shuffle b] \fund_c,
  \end{equation}
  where $[c \mid  \in a \shuffle b]$ denotes the coefficient of $c$ in the slide product $a \shuffle b$ which is precisely the nonvirtual terms in the shuffle product. In particular, this shows that
  \begin{displaymath}
    \schubert_{u \times_n v} = \sum_{\rho \in \R(u \times_n v)} \fund_{\des(\rho)} = \sum_{(\sigma,\tau) \in \R(u) \times R(1^n \times v)} \fund_{\des(\sigma)} \fund_{\des(\tau)} = \schubert_u \schubert_{1^n \times v}.
  \end{displaymath}
  Therefore, to prove \eqref{e:cross}, it suffices to show that for $n\geq k$, we have
  \begin{equation}
    \schubert_{1^n \times w} (x_1,\ldots,x_k) = \stanley_w(x_1,\ldots,x_k).
    \label{e:1down}
  \end{equation}

  Let $\rho \in \R(w)$, and let $\hat{\rho} \in \R(1^n\times w)$ be such that $\hat{\rho}_i = \rho_i + n$. Recall from \S\ref{sec:schubert} that this correspondence gives a bijection between $\R(w)$ and $\R(1^n\times w)$. If $\rho\in\R(w)$ is nonvirtual, then $\des(\hat{\rho}) = 0^n\times\des(\rho)$, and so by Lemma~\ref{lem:fund-trunc}, we have
  \[ \fund_{\des(\hat{\rho})} (x_1,\ldots,x_k) = \fund_{0^n \times \des(\rho)}(x_1,\ldots,x_k) = F_{\des(\rho)}(x_1,\ldots,x_k).\]
  Suppose then that $\rho$ is virtual. With notation as in Definition~\ref{def:des-red}, $r_i<1$ only if $r_i = r_{i+1}-1$. Setting $R = 1-\min(1,r_1) \geq 0$, we have $\des(1^R \times \rho) \neq \varnothing$ with the first $R+1$ terms nonzero. When $n\geq R$, Lemma~\ref{lem:fund-trunc} applies with $r = n-R$ and $s = n+1$ and gives
  \[ \fund_{\des(\hat{\rho})} (x_1,\ldots,x_k) = \fund_{0^{n-R} \times \des(1^{R}\times\rho)} (x_1,\ldots,x_k) = F_{\mathrm{Des}(\rho)} (x_1,\ldots,x_k).\]
  If $n<R$, then $\hat{\rho}$ is virtual and the length of $\mathrm{Des}(\rho)$ is greater than $n$, giving
  \[ \fund_{\des(\hat{\rho})} (x_1,\ldots,x_k) = 0 = F_{\mathrm{Des}(\rho)} (x_1,\ldots,x_k).\]
  Combining these cases, we have
  \begin{displaymath}
    \schubert_{1^n \times w} (x_1,\ldots,x_k) = \sum_{\rho \in \R(1^n \times w)} \fund_{\des(\rho)} (x_1,\ldots,x_k) 
    = \sum_{\rho\in\R(w)} F_{\mathrm{Des}(\rho)}(x_1,\ldots,x_k) = \stanley_w(x_1,\ldots,x_k),
  \end{displaymath}
  thereby completing the proof.
\end{proof}

Theorem~\ref{thm:cross} motivates developing an understanding of truncating Schubert polynomials. To make this more compelling, we have the following corollary for the case when $v$ is \emph{grassmannian}, that is, when $v$ has at most one descent. 

Given a partition $\lambda$ of length $j$ and a positive integer $k \geq j$, the \emph{grassmannian permutation associated to $\lambda$ and $k$}, denoted by $v(\lambda,k)$, is given by
\begin{equation}
  v(\lambda,k)_i = i + \lambda_{k-i+1}
  \label{e:grass}
\end{equation}
for $i = 1,\ldots,k$, where we take $\lambda_i=0$ for $i>j$, and $v(\lambda,k)$ has a unique descent at $k$. For example,
\begin{displaymath}
  \begin{array}{rcrrrrrrrrrrr}
    & & 0 & 0 & 1 & 4 & 4 & 5 \ \vline & & & & & \\\cline{3-13}
    v((5,4,4,1), 6) & = & 1 & 2 & 4 & 8 & 9 & 1\!1 \ \vline & 3 & 5 & 6 & 7 & 1\!0 . 
  \end{array}
\end{displaymath}
It is easy to see that $v(\lambda,k)$ gives a bijection between grassmannian permutations with unique descent at $k$ and partitions of length at most $k$.

Abusing history, define the \emph{Schur polynomial for $\lambda$ in $k$ variables}, denoted by $s_{\lambda}(x_1,\ldots,x_k)$, by the following result of Lascoux and Sch{\"u}tzenberger \cite{LS82}
\begin{equation}
  s_{\lambda}(x_1,\ldots,x_k) = \schubert_{v(\lambda,k)}.
  \label{e:schur}
\end{equation}

\begin{corollary}
  For $v = v(\lambda,k)$ a grassmannian permutation and $u$ any permutation with last descent at or before $k$, and for any $\ell\geq k$ we have
  \begin{equation}
    \schubert_{u} \schubert_{v} = \schubert_{u} s_{\lambda}(x_1,\ldots,x_k) = \schubert_{u \times_{\ell} v}(x_1,\ldots,x_k).
    \label{e:kohnert}
  \end{equation}
  \label{cor:kohnert}
\end{corollary}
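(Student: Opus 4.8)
The plan is to derive the corollary from Theorem~\ref{thm:cross} and the definition \eqref{e:schur} of the Schur polynomial, the only real work being the bookkeeping of how many variables are retained. The first equality is immediate: by \eqref{e:schur} we have $\schubert_v = \schubert_{v(\lambda,k)} = s_\lambda(x_1,\ldots,x_k)$, so $\schubert_u\schubert_v = \schubert_u\, s_\lambda(x_1,\ldots,x_k)$. Everything else is the second equality.

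I would first record the identity $1^n \times v(\lambda,k) = v(\lambda,k+n)$, which is a one-line check from \eqref{e:grass}: prepending $n$ ones and adding $n$ to every value moves the unique descent from position $k$ to position $k+n$ and leaves $\lambda$ unchanged. Feeding this into the specialization \eqref{e:1down} established in the proof of Theorem~\ref{thm:cross} (equivalently, Theorem~\ref{thm:cross} with $u=\mathrm{id}$) and then applying \eqref{e:schur} gives, for every $n\ge k$,
\[
\stanley_{v(\lambda,k)}(x_1,\ldots,x_k) = \schubert_{1^n\times v(\lambda,k)}(x_1,\ldots,x_k) = \schubert_{v(\lambda,k+n)}(x_1,\ldots,x_k) = s_\lambda(x_1,\ldots,x_k),
\]
the last equality because $s_\lambda(x_1,\ldots,x_{k+n})$ specializes to $s_\lambda(x_1,\ldots,x_k)$ when $x_{k+1}=\cdots=x_{k+n}=0$. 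This is the classical fact that the Stanley symmetric function of a grassmannian permutation is a Schur function, here recovered from the tools already in hand.

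Next I would invoke Theorem~\ref{thm:cross}. Let $m$ be minimal with $u_i=i$ for $i>m$; since $u\times_\ell v$ is defined we have $\ell\ge m$, and together with $\ell\ge k$ this gives $\ell\ge j$, where $j:=\max(m,k)$. Applying Theorem~\ref{thm:cross} with its retained-variable count set to $j$ and its shift set to $\ell$ (legitimate since $m\le j\le\ell$) yields
\[
\schubert_{u\times_\ell v}(x_1,\ldots,x_j) = \schubert_u\,\stanley_v(x_1,\ldots,x_j).
\]
Now set $x_{k+1}=\cdots=x_j=0$. The left side becomes the truncation $\schubert_{u\times_\ell v}(x_1,\ldots,x_k)$, and $\stanley_v(x_1,\ldots,x_j)$ becomes $\stanley_v(x_1,\ldots,x_k)=s_\lambda(x_1,\ldots,x_k)$ by the previous step. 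The factor $\schubert_u$ is left untouched because $u$ has last descent at or before $k$, so that $\schubert_u$ involves only $x_1,\ldots,x_k$. Hence $\schubert_{u\times_\ell v}(x_1,\ldots,x_k)=\schubert_u\,s_\lambda(x_1,\ldots,x_k)$, which with the first equality completes the proof.

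I expect the main obstacle to be precisely this variable count rather than any new idea: Theorem~\ref{thm:cross} is only valid when at least $m$ variables are kept, yet the conclusion concerns only $k$ variables, and the two can genuinely differ (for instance $u=312$ has last descent $1$ but $m=3$). The hypothesis that $u$ has last descent at or before $k$ is exactly what permits truncating from $j$ down to $k$ variables without disturbing $\schubert_u$; thus the one input external to Theorem~\ref{thm:cross} and \eqref{e:schur} is the standard support statement that $\schubert_u$ lies in $\mathbb{Z}[x_1,\ldots,x_d]$ with $d$ the last descent of $u$. If self-containment is desired, this can be extracted from Theorem~\ref{thm:schubert-slide} by noting that each $\fund_{\des(\rho)}$ uses only the variables up to the last nonzero part of $\des(\rho)$.
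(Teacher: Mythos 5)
Your proof is correct, but it takes a genuinely different route from the paper's. The paper never revisits \eqref{e:1down}; instead it quotes two further results of Assaf and Searles \cite{AS17} --- the criterion for when $\fund_a = F_{\mathrm{flat}(a)}(x_1,\ldots,x_k)$ (namely, when the zero parts of $a$ form an initial segment) and the upper-unitriangularity of $\schubert_w$ in the fundamental slide basis --- and deduces the sharper characterization that $\stanley_w(x_1,\ldots,x_k) = \schubert_w$ \emph{if and only if} $w$ is grassmannian with descent at or before $k$, after which the corollary follows from \eqref{e:schur} and Theorem~\ref{thm:cross}. You instead extract the one implication you need from tools already proved internally: the identity $1^n\times v(\lambda,k) = v(\lambda,k+n)$ fed into \eqref{e:1down}, plus classical stability of Schur polynomials. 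Two trade-offs are worth noting. First, under the paper's ``abusing history'' definition \eqref{e:schur}, the stability statement you invoke is literally the assertion $\schubert_{v(\lambda,k+n)}(x_1,\ldots,x_k) = \schubert_{v(\lambda,k)}$, i.e.\ the $u=\mathrm{id}$ case of the corollary itself; importing it from classical theory is legitimate (since \cite{LS82} identifies $\schubert_{v(\lambda,k)}$ with the honest Schur polynomial), but you should flag it as an external input to dispel any appearance of circularity, whereas the paper's triangularity argument proves the needed equality internally. Second, your bookkeeping is actually more careful than the paper's: the one-line conclusion ``the result follows from \eqref{e:schur}'' implicitly applies Theorem~\ref{thm:cross} with $k$ retained variables, which requires $m\leq k$, while the hypothesis only bounds the last descent of $u$ (your example $u=312$ makes the distinction); your detour through $j=\max(m,k)$ variables followed by truncation, using the support fact $\schubert_u\in\mathbb{Z}[x_1,\ldots,x_k]$, is exactly the repair that step needs.
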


\begin{proof}
  For $a$ a weak composition of length $k$, Assaf and Searles \cite{AS17} showed that
  \begin{displaymath}
    \fund_{a} = F_{\mathrm{flat}(a)} (x_1,\ldots,x_k)
  \end{displaymath}
  if and only if there exists an index $1\leq j\leq k$ such that $a_i=0$ if and only if $i<j$. Moreover, they showed that $\schubert_w$ is upper-unitriangular with respect to the fundamental basis. Therefore we conclude that, for $w$ any permutation and $k$ a positive integer, we have
  \begin{displaymath}
    \stanley_{w} (x_1,\ldots,x_k) = \schubert_w
  \end{displaymath}
  if and only if $w$ is grassmannian with descent at or before $k$. The result follows from \eqref{e:schur}.
\end{proof}

\begin{remark}
  Corollary~\ref{cor:kohnert} was first asserted by Kohnert \cite{Koh91}, though it lacked rigorous proof.
\end{remark}

%
\section{Schubert expansions}
%
\label{sec:trunc}

Since Schubert polynomials are polynomial representatives for Schubert classes in the cohomology ring \cite{LS82}, the structure constants for Schubert polynomials, $c_{u,v}^{w}$, defined by
\begin{equation}
  \schubert_{u} \cdot \schubert_{v} = \sum_{w} c_{u,v}^{w} \schubert_{w},
\label{e:basis}
\end{equation}
enumerate flags in a suitable triple intersection of Schubert varieties. A fundamental open problem in Schubert calculus is to find a \emph{positive} combinatorial construction for $c_{u,v}^{w}$. By Corollary~\ref{cor:kohnert}, we can solve this problem for $v$ grassmannian by giving a combinatorial formula for the truncation of a Schubert polynomial. The key to proving our truncation formula is the well-known Monk's rule \cite{Mon59}, which computes $c_{u,v}^{w}$ whenever $\ell(v)=1$.

\begin{lemma}[Monk's Rule]
  For $w$ a permutation and $m,k$ positive integers, we have
  \begin{equation}
    \schubert_w \cdot (x_1 + \cdots + x_k) = \sum_{\substack{a \leq k < b \\ \ell(w (a,b)) = \ell(w) + 1}} \schubert_{w (a,b)} .
    \label{e:monk}
  \end{equation}
  \label{lem:monk}
\end{lemma}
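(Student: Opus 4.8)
The plan is to prove Monk's rule by realizing $x_1 + \cdots + x_k$ as a Schubert polynomial and then propagating the identity through the divided difference operators that link the Schubert basis. First I would record that $\R(s_k) = \{(k)\}$, whose compatible sequences are exactly the length-one sequences $(j)$ with $1 \le j \le k$; by Definition~\ref{def:schubert} this gives $x_1 + \cdots + x_k = \schubert_{s_k}$, and I write $\sigma_k = x_1 + \cdots + x_k$ for brevity. The engine is the family $\partial_i f = (f - s_i f)/(x_i - x_{i+1})$, for which I would use the twisted Leibniz rule $\partial_i(fg) = (\partial_i f)g + (s_i f)(\partial_i g)$ and the action $\partial_i \schubert_w = \schubert_{w s_i}$ when $\ell(w s_i) = \ell(w)-1$ and $\partial_i \schubert_w = 0$ otherwise. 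The action is standard; since it is not established in the excerpt, I would include a short derivation of it from Definition~\ref{def:schubert}. A one-line computation gives $s_i \sigma_k = \sigma_k$ for $i \neq k$ and $s_k \sigma_k = \sigma_k - x_k + x_{k+1}$, hence $\partial_i \sigma_k = \delta_{i,k}$.

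Next I would induct on $\ell(w)$. For the base case $w = \mathrm{id}$ the right-hand side collapses to a single term: $\ell((a,b)) = 2(b-a)-1 = 1$ forces $b = a+1$, and then $a \le k < a+1$ forces $a = k$, so the only cover is $(k,k+1) = s_k$, recovering $\sigma_k = \schubert_{s_k}$. For the inductive step write $P = \sigma_k \schubert_w$ and let $Q$ be the proposed right-hand side. For any $i \neq k$ the Leibniz rule, together with $\partial_i \sigma_k = 0$ and $s_i \sigma_k = \sigma_k$, collapses to $\partial_i P = \sigma_k\, \partial_i \schubert_w$, which by the inductive hypothesis applied to the shorter permutation $w s_i$ equals the Monk sum for $w s_i$. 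On the other side, $\partial_i Q$ is the sum of $\schubert_{w(a,b)s_i}$ over those Monk covers with $i$ a descent of $w(a,b)$, and the identity $w(a,b)s_i = (w s_i)\,(s_i(a),s_i(b))$ together with the fact that $s_i$ fixes each of the blocks $\{1,\ldots,k\}$ and $\{k+1,k+2,\ldots\}$ setwise (since $i \neq k$) shows that this relabeling carries Monk covers of $w$ bijectively to Monk covers of $w s_i$. Tracking the length and descent conditions through this bijection yields $\partial_i P = \partial_i Q$ for every $i \neq k$.

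The hard part will be the position $i = k$ and the final normalization. Matching only the operators $\partial_i$ with $i \neq k$ is insufficient, because their common kernel consists of all polynomials symmetric separately in $x_1,\ldots,x_k$ and in $x_{k+1},x_{k+2},\ldots$, which is far too large to force $P = Q$. The obstruction is concrete: the Leibniz rule gives $\partial_k P = \schubert_w + (\sigma_k - x_k + x_{k+1})\,\partial_k \schubert_w$, and the asymmetric factor $\sigma_k - x_k + x_{k+1}$ prevents a clean reduction to a lower-length Monk identity. I would handle this by verifying that the leading $\schubert_w$ term is exactly the contribution of the cover $(k,k+1)$ to $\partial_k Q$ (note $\partial_k \schubert_{w s_k} = \schubert_w$), and then closing the argument with a degree/leading-monomial normalization that pins down the remaining symmetric residue of $P - Q$; strengthening the induction to track the entire Schubert expansion, rather than a single operator at a time, is the natural way to make this rigorous.

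Finally, I would note an alternative more in keeping with the combinatorial framework of Section~\ref{sec:schubert}: expanding $\schubert_w$ through Definition~\ref{def:schubert} gives $\sigma_k \schubert_w = \sum_{j \le k}\sum_{\rho,\alpha} x^{\alpha + e_j}$, and one can seek an explicit insertion map sending each triple (reduced word of $w$, compatible sequence, chosen index $j \le k$) to a reduced word with compatible sequence for some $w(a,b)$. This is a bounded form of Lascoux--Sch\"utzenberger/Little insertion; verifying that the insertion always produces a length-one Bruhat cover crossing position $k$ and that the map is a bijection is the main obstacle on this route, which is why I would favor the divided difference proof as the primary argument.
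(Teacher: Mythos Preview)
The paper does not prove this lemma at all: Monk's rule is quoted as a classical result with a citation to \cite{Mon59} and is used as a black box in the proof of Theorem~\ref{thm:trunc}. So there is no ``paper's own proof'' to compare your proposal against.

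As for your sketch itself, the divided-difference strategy is one of the standard routes to Monk's rule, but the proposal is not yet a proof, and some of the gaps are more serious than you indicate. You already flag the main one: knowing $\partial_i(P-Q)=0$ for all $i\neq k$ only shows $P-Q$ is symmetric in the two blocks $x_1,\ldots,x_k$ and $x_{k+1},x_{k+2},\ldots$, which is far from zero, and your suggested ``degree/leading-monomial normalization'' is not spelled out. The usual fix is different: one does not induct on $\ell(w)$ at all but rather proves the identity first for $w=w_0^{(n)}$ directly from $\schubert_{w_0}=x_1^{n-1}\cdots x_{n-1}$, and then pushes it down by applying a single $\partial_i$ on both sides, using Leibniz exactly as you do. That descending induction avoids the kernel problem entirely because every $w$ is reached from $w_0$ by some $\partial_i$.

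There is also an unflagged subtlety in your computation of $\partial_i Q$. You assert that the map $(a,b)\mapsto(s_i a,s_i b)$ carries Monk covers of $w$ with $i$ a descent of $w(a,b)$ bijectively to Monk covers of $ws_i$; checking that the length condition $\ell\bigl((ws_i)(s_ia,s_ib)\bigr)=\ell(ws_i)+1$ is actually equivalent to $\ell(w(a,b))=\ell(w)+1$ together with $i\in\mathrm{Des}(w(a,b))$ requires a short case analysis on how $\{a,b\}$ meets $\{i,i+1\}$, and you should also verify separately that when $i$ is \emph{not} a descent of $w$ (so $\partial_i P=0$) no Monk cover of $w$ acquires a descent at $i$, so that $\partial_i Q=0$ as well. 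None of this is hard, but it is not automatic, and the proposal reads as if it were.
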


\begin{theorem}
  Let $w$ be a permutation with final descent at position $k$. Then
  \begin{equation}
    \schubert_w(x_1,\ldots,x_{k-1},0) = \sum_{\substack{u = \hat{w} (a_1,k) \cdots (a_m,k+m-1) \\ a_i < k \hspace{1em} \mathrm{and} \hspace{1em} \ell(u) = \ell(w)}} \schubert_u,
    \label{e:trunc}
  \end{equation}
  where $k+m = \max\{i \mid w_k > w_i\}$, and $\hat{w} = w_1 \cdots w_{k-1} w_{k+1} \cdots w_{m} w_k$.
  \label{thm:trunc}
\end{theorem}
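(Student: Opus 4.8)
The plan is to reduce the truncation to Monk's rule by an induction that peels off one variable at a time. The engine is a divided‑difference identity coming from the descent at $k$. Since $w$ has its last descent at $k$, it has a descent at $k$, so the defining recurrence for Schubert polynomials \cite{LS82} gives $\partial_k\schubert_w=\schubert_{ws_k}$, equivalently $\schubert_w-s_k\schubert_w=(x_k-x_{k+1})\schubert_{ws_k}$. As the last descent of $w$ is at $k$, the polynomial $\schubert_w$ is free of $x_{k+1}$; setting $x_{k+1}=0$ and using $(s_k\schubert_w)|_{x_{k+1}=0}=\schubert_w|_{x_k=0}$ yields the key identity $\schubert_w|_{x_k=0}=\schubert_w-x_k\,\schubert_{ws_k}|_{x_{k+1}=0}$. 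Here $ws_k$ is $w$ with the values in positions $k,k+1$ interchanged; it has the analogous parameter $m-1$, last descent at $k+1$ (or below $k$ when $m=1$), and the same associated permutation $\hat w=ws_k s_{k+1}\cdots s_{k+m-1}$, so the right‑hand truncation is again an instance of the theorem.

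I would induct on $m$. For the base case $m=1$ one has $ws_k=\hat w$ with last descent below $k$, so $\schubert_{ws_k}|_{x_{k+1}=0}=\schubert_{\hat w}$, and expanding $x_k\schubert_{\hat w}$ by Monk's rule (Lemma~\ref{lem:monk}) as the difference of the rule with cutoffs $k$ and $k-1$ gives $x_k\schubert_{\hat w}=\sum_{b>k}\schubert_{\hat w(k,b)}-\sum_{a<k}\schubert_{\hat w(a,k)}$, summed over length‑increasing transpositions. Because $\hat w_{k+1}=w_k$ exceeds every later value, only $b=k+1$ survives in the first sum, where $\hat w(k,k+1)=w$; this term cancels the leading $\schubert_w$ and leaves exactly $\sum_{a<k}\schubert_{\hat w(a,k)}$, the claimed formula. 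For the inductive step, apply the theorem to $ws_k$ to write $\schubert_{ws_k}|_{x_{k+1}=0}=\sum_{u'}\schubert_{u'}$ over chains $u'=\hat w(b_1,k+1)\cdots(b_{m-1},k+m-1)$ with $b_i<k+1$ and $\ell(u')=\ell(w)-1$, substitute into the key identity, and expand each $x_k\schubert_{u'}$ by Monk's rule as above. This produces $\schubert_w|_{x_k=0}=\schubert_w-\sum_{u'}\sum_{b>k}\schubert_{u'(k,b)}+\sum_{u'}\sum_{a<k}\schubert_{u'(a,k)}$, with all inner sums over length‑increasing transpositions.

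It then remains to match this with the stated sum, and this combinatorial reorganization is where I expect the real difficulty to lie. Since Schubert polynomials are linearly independent, it suffices to compare coefficients, i.e.\ to exhibit the cancellations and reindexings directly on chains. Two things must be shown: first, that the ``upward'' terms $\sum_{u'}\sum_{b>k}\schubert_{u'(k,b)}$ telescope against $\schubert_w$, generalizing the one‑term collapse of the base case; second, that in each surviving term $u'(a,k)$ the new transposition $(a,k)$ can be commuted to the front of the chain to produce a factorization $\hat w(a_1,k)(a_2,k+1)\cdots(a_m,k+m-1)$ of the form in the statement. The relevant moves are the identities $(k,c)(a,k)=(a,k)(a,c)$ and $(a,c)(a,k)=(a,k)(k,c)$, which slide $(a,k)$ leftward while adjusting the first coordinates of the remaining factors but preserving their second coordinates $k+1,\ldots,k+m-1$.

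The crux is controlling these adjustments: I must verify that the induction's constraint $b_i\le k$ is tightened to $a_i<k$ exactly by the commutations (in particular that length‑increasing chains never force the forbidden first coordinate $k$ produced by the second identity), that the resulting chain factorizations are unique so that no spurious multiplicities appear, and that the length condition $\ell=\ell(w)$ is respected throughout. Equivalently, iterating the key identity telescopes to the closed form $\schubert_w|_{x_k=0}=\sum_{j=0}^m(-1)^j\,x_kx_{k+1}\cdots x_{k+j-1}\,\schubert_{w^{(j)}}$ with $w^{(j)}=ws_k\cdots s_{k+j-1}$ and $w^{(m)}=\hat w$, and the same obstacle reappears as the need to expand these monomial multiples by iterated Monk's rule and collapse the alternating sum to the positive chain sum. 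The divided‑difference reduction and the Monk expansions are routine; the bijective bookkeeping of chains is the substance of the proof.
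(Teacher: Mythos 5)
Your divided-difference entry point is sound and genuinely different from the paper's: since $w$ has its last descent at $k$, the recurrence $\schubert_w - s_k\schubert_w = (x_k-x_{k+1})\schubert_{ws_k}$ together with the fact that $\schubert_w$ is free of $x_{k+1}$ does give your key identity $\schubert_w|_{x_k=0} = \schubert_w - x_k\,\schubert_{ws_k}|_{x_{k+1}=0}$, and $ws_k$ does have last descent at $k+1$, parameter $m-1$, and the same associated permutation $\hat{w}$, so the induction on $m$ is well posed. (One slip in your base case: for $m=1$ the value $\hat{w}_{k+1}=w_k$ does \emph{not} exceed every later value --- it is smaller than all of them; the correct reason only $b=k+1$ survives is that $w_k$ lies strictly between $\hat{w}_k=w_{k+1}$ and every $\hat{w}_b$ with $b>k+1$, so it blocks the length condition for every larger $b$.)

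The genuine gap is that the step you explicitly defer is the entire content of the theorem. After applying the inductive hypothesis to $ws_k$ and Monk's rule, you arrive at $\schubert_w - \sum_{u'}\sum_{b>k}\schubert_{u'(k,b)} + \sum_{u'}\sum_{a<k}\schubert_{u'(a,k)}$, where $u'$ runs over the chains for $ws_k$, and you must prove (i) that the negatively-signed terms cancel $\schubert_w$ exactly once and also cancel the ``excess'' positive terms arising when some entry of the chain for $u'$ equals $k$, and (ii) that the surviving terms, after commuting $(a,k)$ to the front, biject without multiplicity onto the chains in \eqref{e:trunc}. You name these claims but do not prove them, and they are not routine: your signed recursion reintroduces negative terms at \emph{every} inductive step, so the cancellation analysis must be redone each time. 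The paper's proof is structured precisely to avoid carrying signs: it applies the same Monk difference not to $w$ but to $v=w(k,b)$ with $b=\max\{j \mid w_k>w_j\}$, where setting $x_k=0$ kills the left-hand side and the unique surviving upward term, yielding the sign-free bridge identity \eqref{e:bridge}; iterating that identity stays positive, and the only residual work is the explicit reindexing bijection on chains (inserting cancelling pairs $(k,j)(k,j)$ and commuting down transpositions left), which the paper spells out. To complete your route you would need to supply an analogous explicit bijection \emph{plus} the sign-cancellation bookkeeping --- strictly more than the paper's proof requires --- so as it stands the proposal is a plausible plan, not a proof.
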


\begin{proof}
  We begin with the simple observation that
  \[ x_k = (x_1 + \cdots + x_k) - (x_1 + \cdots + x_{k-1}). \]
  Multiplying through by $\schubert_v$, applying Monk's rule and canceling terms, we get
  \begin{displaymath}
    \schubert_{v} x_k = \sum_{\substack{a \leq k < b \\ \ell(v (a,b)) = \ell(v) + 1}} \hspace{-2ex} \schubert_{v (a,b)} 
    - \sum_{\substack{a < k \leq b \\ \ell(v (a,b)) = \ell(v) + 1}} \hspace{-2ex} \schubert_{v (a,b)} 
    = \sum_{\substack{k < b \\ \ell(v (k,b)) = \ell(v) + 1}} \hspace{-2ex} \schubert_{v (k,b)} 
    - \sum_{\substack{a < k \\ \ell(v (a,k)) = \ell(v) + 1}} \hspace{-2ex} \schubert_{v (a,k)} 
  \end{displaymath}
  If $v$ has no descent beyond position $k$, then there is a unique $b$ such that $\ell(v (k,b)) = \ell(v)+1$, namely $b = \max\{j>b \mid v_b < v_j\}$. Therefore, for this case, setting $x_k=0$ yields
  \begin{equation}
    \schubert_{v(k,b)}(x_1, \ldots, x_{k-1}) = \sum_{\substack{a < k \\ \ell(v (a,k)) = \ell(v) + 1}} \schubert_{v (a,k)} (x_1, \ldots, x_{k-1}).
    \label{e:bridge}
  \end{equation}
  Beginning with $w$ such that the final descent of $w$ is at position $k$, set $b = \max\{j \mid w_k > w_j\}$. Then taking $v = w(k,b)$, we consider all $u=v (a,k) = w (k,b) (a,k)$ where $a<k$ appears in the right hand side of \eqref{e:bridge}. Clearly $u$ has no descent beyond $k$. However, if $u$ has a descent at $k$, we may expand it similarly, noting that $\max\{j \mid u_k > u_j\} < b$. Thus the process ultimately yields permutation with no descent at or beyond $k$, and we have
  \begin{equation}
    \schubert_w(x_1,\ldots,x_{k-1}) = \sum_{\substack{(a_1,\ldots,a_n) < k < (b_1,\ldots,b_n) \\ \ell(w (k,b_1)(a_1,k) \cdots (k,b_n)(a_n,k) ) = \ell(w)}} \schubert_{w (k,b_1)(a_1,k) \cdots (k,b_n)(a_n,k)},
    \label{e:down-up}
  \end{equation}
  where $b_1>\cdots>b_n$ are determined by $w$ and the $a_i$'s, and the $a_i$'s are necessarily distinct. To derive \eqref{e:trunc} from \eqref{e:down-up}, we describe a simple bijection between the transition sequences appearing in each expansion. Note that each transposition $(k,b_i)$ decreases the length, and each transposition $(a_i,k)$ increases it; call these down and up transpositions, respectively.
  
  Beginning with a sequence $(k,b_1)(a_1,k) \cdots (k,b_n)(a_n,k)$, between any two pairs $(k,b_i)(a_i,k)$ and $(k,b_{i+1})(a_{i+1},k)$ insert terms $(k,j)(k,j)$ for $j=b_i-1,\ldots,b_{i+1}+1$, and regard the left $(k,j)$ as down and the right $(k,j)$ as up. This increases the number of transpositions to $m$, where $k+m = \max\{i \mid w_k > w_i\}$. Move every down transposition left using the commutativity relations
  \[ (a_{i-1}, k) (k, b_{i}) = (k, b_{i}) (a_{i-1}, b_{i}) \hspace{1em} \mbox{or} \hspace{1em} (k, b_{i-1}) (k, b_{i}) = (k, b_{i}) (b_{i-1}, b_{i}),  \]
  where here the left transposition is the first up transposition crossed. Note that each up transposition will be crossed at most once by a non-commuting down transposition. The result is
  \[ (k, b_1) \cdots (k, b_m) (a_1 b_2) \cdots (a_{m-1} b_m) (a_m k), \]
  where we have $b_i = (k+m)-i+1$, and we write any up transposition of the form $(b_{j-1} b_{j})$ with $a_j$ the smaller entry. Finally, we reverse the order of the up transpositions, noting that they commute except for one case, for which we use the commutativity relation
  \[ (a_{i-1}, b_i) (b_{i-1}, b_i) = (a_{i-1},b_{i-1}) (a_{i-1}b_{i}). \]
  This process is clearly reversible, thus establishing the desired bijection to complete the proof.
\end{proof}

For example, take $w=51738246$ and $k=5$. Then we have $\hat{w}=51732468$ with $b_1,b_2,b_3 = 5,6,7$, and so $a_1,a_2,a_3 = 2,4,4$ or $2,4,1$, giving
\begin{displaymath}
  \schubert_{51738246}(x_1,\ldots,x_4,0) = \schubert_{5276134} + \schubert_{6274135}.
\end{displaymath}
The corresponding example for the bijection (in reverse) given in the proof for $u=5276134$ is
\begin{displaymath}
  (5,8)(5,7)(5,6)(2,5)(4,6)(4,7) = (5,8)(4,5)(5,7)(5,7)(5,6)(2,5) = (5,8)(4,5)(5,6)(2,5).
\end{displaymath}

In particular, Theorem~\ref{thm:trunc} shows that a truncated Schubert polynomial is Schubert positive. Combining this with Theorem~\ref{thm:cross}, we have a combinatorial proof of the following.

\begin{corollary}
  The product of a Schubert polynomial in $j\leq k$ variables and a Stanley symmetric polynomial in $k$ variables expands nonnegatively in the Schubert basis.
\end{corollary}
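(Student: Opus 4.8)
The plan is to combine the two main theorems already in hand: Theorem~\ref{thm:cross} realizes the product $\schubert_u \, \stanley_v(x_1,\ldots,x_k)$ as the truncation to $k$ variables of a single genuine Schubert polynomial, and Theorem~\ref{thm:trunc} shows that truncating a Schubert polynomial by one variable stays Schubert positive. Since Theorem~\ref{thm:trunc} only sets one variable to zero whereas the truncation to $k$ variables kills every variable of index exceeding $k$, the one new ingredient is an iteration of the truncation formula, organized so that its hypothesis remains valid at each stage.

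First I would invoke Theorem~\ref{thm:cross}, for $n \geq k$, to write
\[
  \schubert_u \, \stanley_v(x_1,\ldots,x_k) = \schubert_{u \times_n v}(x_1,\ldots,x_k),
\]
so the problem reduces to expanding the truncation of the honest Schubert polynomial $\schubert_w$, where $w = u \times_n v$. Let $D$ be the position of the last descent of $w$; recalling that a Schubert polynomial involves no variable of index exceeding its last descent, computing $\schubert_w(x_1,\ldots,x_k)$ is the same as successively setting $x_D = 0$, then $x_{D-1}=0$, and so on down through $x_{k+1}=0$. I would then run a finite downward induction on the index $i$ of the variable being zeroed. Assume we have reached a nonnegative integer combination $\sum_{w'} c_{w'}\,\schubert_{w'}$ in which every $w'$ has last descent at most $i$, and that we now set $x_i=0$. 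Each $\schubert_{w'}$ whose last descent is strictly below $i$ is unaffected, since it does not involve $x_i$; each $\schubert_{w'}$ with last descent exactly $i$ is rewritten by Theorem~\ref{thm:trunc} (applied with its truncation index equal to $i$) as a nonnegative integer combination of Schubert polynomials, every one of which has no descent at or beyond $i$. Hence after zeroing $x_i$ we again have a nonnegative integer combination supported on permutations of last descent at most $i-1$, and the induction continues. Beginning at $i=D$ and stopping once $x_{k+1}$ has been zeroed yields a nonnegative integer combination of Schubert polynomials equal to $\schubert_w(x_1,\ldots,x_k)$, which is the required expansion.

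The principal thing to verify is the bookkeeping of this iteration rather than any substantial new estimate: at the step that zeroes $x_i$ one must know both that every permutation to which Theorem~\ref{thm:trunc} is applied has its final descent exactly at $i$, and that the operation cannot leave any resulting permutation with a descent at or beyond $i$. Both facts are precisely what the truncation formula~\eqref{e:trunc} provides, its right-hand side being supported on permutations with no descent at or beyond the truncation position. Nonnegativity and integrality are preserved automatically, since each application of Theorem~\ref{thm:trunc} contributes nonnegative integer coefficients and the unaffected terms contribute themselves; together with the identification supplied by Theorem~\ref{thm:cross}, this establishes the claimed nonnegative Schubert expansion.
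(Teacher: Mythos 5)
Your proof is correct and takes essentially the same route as the paper, which likewise obtains this corollary by combining Theorem~\ref{thm:cross} (the product is a truncated Schubert polynomial) with iterated applications of Theorem~\ref{thm:trunc} (truncation by one variable, applied at the last descent, is Schubert positive). Your explicit downward induction on the variable index, using the fact that each application of \eqref{e:trunc} outputs permutations with no descent at or beyond the truncation position, is exactly the bookkeeping the paper leaves implicit.
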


Moreover, Theorem~\ref{thm:trunc} gives a recipe for computing $\schubert_u \stanley_v(x_1,\ldots,x_k)$ in $m-k+\ell$ steps, where $m$ is the smallest index $i$ such that $u_i=i$ for all $i>m$, and $\ell$ is the position of the last descent in $v$. Note that since Stanley symmetric polynomials are stable under $v \mapsto 1^n\times v$, we may always take $v_1>1$ to minimize $\ell$. When $v$ is grassmannian, we can do better still.

\begin{theorem}
  Let $u$ be a permutation, $k$ a positive integer for which $u_i < u_{i+1}$ for any $i>k$, and $\lambda$ a partition of length at most $k$. Let $\ell = \max\{k,i \mid u_i \neq i \}$, and set $m = \ell + \ell(\lambda)$ and $w = u\times_{\ell} v(\lambda,\ell(\lambda))$. Then
  \begin{equation}
    \schubert_u s_{\lambda}(x_1,\ldots,x_k) = \sum_{\substack{a_{i,j} < m-i \\ i=0,\ldots,n < m-k \\ j=0,\ldots,\lambda_1-1}} \schubert_{w D_0 U_0 \cdots D_n U_n}
  \end{equation}
  where $D_i = (m-i,m+\lambda_1-i) \cdots (m-i,m+1-i)$, $U_i = (a_{i,0}, m-i) \cdots (a_{i,\lambda_1-1}, m+(\lambda_1-1)-i)$, and the sum is taken over $\{a_{i,j}\}$ such that $\ell(w D_0 U_0 \cdots D_h U_h) = \ell(w)$ for $h=0,\ldots,n$.
  \label{thm:schub-schur}
\end{theorem}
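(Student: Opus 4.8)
The plan is to reduce the left-hand side to a single truncated Schubert polynomial and then to iterate the one-variable truncation of Theorem~\ref{thm:trunc} exactly $m-k$ times, reading off the blocks $D_iU_i$ from the transpositions produced at each stage.

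First I would identify $\schubert_u s_{\lambda}(x_1,\ldots,x_k)$ as a truncation of $\schubert_w$. The product formula established inside the proof of Theorem~\ref{thm:cross} gives $\schubert_{u\times_{\ell} v(\lambda,\ell(\lambda))}=\schubert_u\,\schubert_{1^{\ell}\times v(\lambda,\ell(\lambda))}$, and since $1^{\ell}\times v(\lambda,\ell(\lambda))=v(\lambda,m)$ we get, via \eqref{e:schur}, that $\schubert_w=\schubert_u\, s_{\lambda}(x_1,\ldots,x_m)$. Because $u$ has last descent at most $k\le m$, the polynomial $\schubert_u$ lies in $\mathbb{Z}[x_1,\ldots,x_k]$, so setting $x_{k+1}=\cdots=x_m=0$ — a ring homomorphism — fixes $\schubert_u$ and sends $s_{\lambda}(x_1,\ldots,x_m)$ to $s_{\lambda}(x_1,\ldots,x_k)$. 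Hence $\schubert_u s_{\lambda}(x_1,\ldots,x_k)=\schubert_w(x_1,\ldots,x_k)$, and $w$ has last descent exactly $m=\ell+\ell(\lambda)$, with its maximal value $m+\lambda_1$ occupying position $m$. This is the statement that Corollary~\ref{cor:kohnert} makes efficient: rather than truncating from position $\ell+k$ as in \eqref{e:kohnert}, we start from the smaller descent position $m$.

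Next I would peel off the variables $x_m,x_{m-1},\ldots,x_{k+1}$ one at a time, applying Theorem~\ref{thm:trunc} at each step; there are precisely $m-k$ such steps, which accounts for the blocks $D_0U_0,\ldots,D_nU_n$ with $n=m-k-1$. The content of the match is that the padded down-transpositions of \eqref{e:trunc} at descent position $m-i$ are exactly $\hat z=zD_i$ with $D_i=(m-i,m+\lambda_1-i)\cdots(m-i,m+1-i)$ realizing the cyclic shift of $z_{m-i}$ past the $\lambda_1$ positions above it, while the ensuing up-transpositions $(a_{i,j},m-i+j)$ with $a_{i,j}<m-i$ assemble into $U_i$; this identification can be checked against the worked example following Theorem~\ref{thm:trunc}, where the single block is $(m,m+\lambda_1)\cdots(m,m+1)$ followed by the up-steps $(a_j,m+j)$. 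The cumulative conditions $\ell(wD_0U_0\cdots D_hU_h)=\ell(w)$ are then nothing but the length-preservation $\ell(u)=\ell(w)$ imposed in \eqref{e:trunc} at each of the $h=0,\ldots,n$ intermediate stages.

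The hard part will be controlling the block sizes and the intermediate descent positions. A single application of Theorem~\ref{thm:trunc} at position $m-i$ produces $\max\{p:z_{m-i}>z_p\}-(m-i)$ genuine down-transpositions, which need not equal $\lambda_1$, and, more seriously, after an earlier step some terms will have last descent strictly below $m-i$, so that $m-i$ is no longer a descent at all; for such a term a naive block $D_iU_i$ is no longer length-decreasing in its down part and cannot act as a pass-through. The resolution I would pursue is to run the commutation-and-padding bijection from the proof of Theorem~\ref{thm:trunc} not termwise but across all $m-k$ stages simultaneously, inserting trivial pairs $(m-i,p)(m-i,p)$ to bring every stage up to width $\lambda_1$ and then sliding the down-transpositions leftward using the relations in that proof, so that the finished terms are carried along by forced values of the up-indices $a_{i,j}$ rather than by literal identities. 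Making this global padding precise — in particular verifying that the reversible commutations compose correctly between consecutive blocks so that the resulting chain is exactly $wD_0U_0\cdots D_nU_n$ subject to the stated constraints, and confirming that each block genuinely has width $\lambda_1=\lambda_1$ throughout — is the technical heart of the argument. Once the chain description is in place, nonnegativity and integrality of $\schubert_u s_{\lambda}(x_1,\ldots,x_k)$ in the Schubert basis are immediate, since every summand $\schubert_{wD_0U_0\cdots D_nU_n}$ is a single Schubert polynomial with coefficient one.
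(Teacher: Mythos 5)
Your reduction step is sound and is the same as the paper's: via Theorem~\ref{thm:cross}/Corollary~\ref{cor:kohnert} one gets $\schubert_u s_\lambda(x_1,\ldots,x_k) = \schubert_w(x_1,\ldots,x_k)$, and peeling off $x_m,\ldots,x_{k+1}$ one variable at a time by Theorem~\ref{thm:trunc}, in $m-k$ stages, is also the paper's plan. The gap is in how you handle the two difficulties you yourself flag: that a stage could produce a down-block of width different from $\lambda_1$, and that an intermediate term could have last descent strictly below $m-i$, making it a ``pass-through'' term to which Theorem~\ref{thm:trunc} does not apply. You propose to fix this with a ``global padding'' bijection run across all stages simultaneously, and you explicitly defer its verification as ``the technical heart of the argument.'' That deferred verification is exactly the content of the theorem, so as written the proposal is incomplete; moreover, padding-and-commutation is the wrong tool here --- it is internal to the proof of Theorem~\ref{thm:trunc} itself (converting \eqref{e:down-up} into \eqref{e:trunc}), and nothing in it forces block width $\lambda_1$ or descents in the right positions for an arbitrary permutation.

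What actually closes the gap --- and what the paper does --- is a structural invariant satisfied by this particular $w$ and preserved by each truncation step. Writing $M=m-i$ and $L=\lambda_1$, every permutation $z$ arising at stage $i$ satisfies the interlacing pattern
\[ z_{k+1} < \cdots < z_{M} > z_{M+1} < \cdots < z_{M+L} < z_{M} < z_{M+L+1} < \cdots . \]
For such $z$ the value $z_M$ exceeds exactly the $L$ values immediately to its right, so Theorem~\ref{thm:trunc} applies at position $M$ with exactly $\lambda_1$ down transpositions, i.e.\ with $\hat{z} = zD_i$; and the inductive step (asserted in the paper) is that every term $\hat{z}(a_1,M)\cdots(a_L,M+L-1)$ in the resulting expansion satisfies the same pattern with $M$ replaced by $M-1$ and the same $L$. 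In particular each such term has its last descent exactly at $M-1$, so no term ever becomes a pass-through, and the next block automatically has width exactly $\lambda_1$. This single preservation-of-form induction eliminates both of your concerns at once and makes the literal, termwise iteration of Theorem~\ref{thm:trunc} valid; no cross-stage padding or commutation argument is needed. To repair your write-up, replace the padding sketch with a statement and proof of this invariant (base case: $w=u\times_\ell v(\lambda,\ell(\lambda))$ has the pattern with $M=m$, $L=\lambda_1$; inductive step: the pattern propagates to every summand).
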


\begin{proof}
  Let $W$ be a permutation and $k,L,M$ be integers, with $M>k$, such that 
  \[ W_{k+1} < \cdots < W_{M} > W_{M+1} < \cdots < W_{M+L} < W_{M} < W_{M+L+1} < \cdots . \]
  Then $\schubert_W$ is a polynomial in $M$ variables and, by Theorem~\ref{thm:trunc}, we have
  \begin{equation}
    \schubert_{W}(x_1,\ldots,x_{M-1},0) = \sum_{\substack{U = \hat{W} (a_1,M) \cdots (a_1,M+L-1-i) \\ a_j < M \ \mathrm{and} \ \ell(U) = \ell(W)}} \schubert_{U},
  \end{equation}
  where $\hat{W} = W (M,M+L) \cdots (M,M+1)$. Moreover, the structure of $W$ dictates that any $U$ appearing in the summation on the right must have the form
  \[ U_{k+1} < \cdots < U_{M-1} , U_{M} < \cdots < U_{M+L-1} < U_{M-1} < U_{M+L} < \cdots . \]

  By Corollary~\ref{cor:kohnert}, we have $\schubert_u s_{\lambda}(x_1,\ldots,x_k) = \schubert_w(x_1,\ldots,x_k)$.   We show, by induction on $i$, that
  \begin{equation}
    \schubert_{w D_0 U_0 \cdots D_{i-1} U_{i-1}}(x_1,\ldots,x_{m-i-1},0) = \sum_{\substack{w' = \hat{w} (a_1,m-i) \cdots (a_1,m+\lambda_1-1-i) \\ a_j < m-i \ \mathrm{and} \ \ell(u) = \ell(w)}} \schubert_{w'},
    \label{e:ind}
  \end{equation}
  where $\hat{w} = w D_0 U_0 \cdots D_{i-1} U_{i-1} (m-i,m+\lambda_1-i) \cdots (m-i,m+1-i)$. The theorem follows.

  Clearly $w$ has the form of $W$ from above with $L = \lambda_1$ and $M=m$, proving the base case, and also that $w D_0 U_0$ has the form of $U$ with the same $L,M$. Therefore, we may assume, by induction, that $w D_0 U_0 \cdots D_{i-1} U_{i-1}$ has the form of $W$ for $L=\lambda_1$ and $M=m-i$, and conclude by the argument above that \eqref{e:ind} holds and that any term appearing on the right has the form of $U$. 
\end{proof}

For example, taking $u = 42153$, $k=5$, and $\lambda = (2,1)$ gives $w=421537968$, and
\begin{displaymath}
  \schubert_{42153} s_{(3,1,1)}(x_1,\ldots,x_5) = \schubert_{4235716}+\schubert_{4315726}+\schubert_{4216735}+\schubert_{4217536}+\schubert_{5217346}, 
\end{displaymath}
where, for example, we have
\[ 421537968 (7,9)(7,8)(5,7)(6,8)(6,8)(6,7)(3,6)(5,7) = 4235716.\]
Observe the potential cancellation of $(6,8)(6,8)$. Moreover, notice that
\[ w (7,9)(7,8)(5,7)(6,8)(6,8)(6,7)(3,6)(5,7) = w (7,9)(7,8)(6,7)(5,6)(3,6)(5,7) = u (5,6)(3,6)(5,7). \]

In general, applying the canonical reordering of transpositions from the proof of Theorem~\ref{thm:trunc} results in certain sequences of the form $u (a_1,b_1) \cdots (a_n,b_n)$ where $a_i \leq k < b_i$.

\begin{corollary}
  Let $u$ be any permutation, $k$ a positive integer for which $u_i < u_{i+1}$ for any $i>k$, and $\lambda$ any partition of length at most $k$. Then $c^{w}_{u,v(\lambda,k)}$ enumerates a subset of paths $w = u (a_1,b_1) \cdots (a_n,b_n)$ with $a_i \leq k < b_i$ and $\ell(w) = \ell(u)+n$, where $n$ is the size of $\lambda$.
  \label{cor:schub-schur}
\end{corollary}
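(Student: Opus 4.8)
The plan is to read the desired chains directly off the expansion in Theorem~\ref{thm:schub-schur} and then normalize them using the reordering of transpositions from the proof of Theorem~\ref{thm:trunc}. By Corollary~\ref{cor:kohnert} we have $\schubert_u \schubert_{v(\lambda,k)} = \schubert_u s_{\lambda}(x_1,\ldots,x_k)$, so the structure constants $c^{w}_{u,v(\lambda,k)}$ are exactly the coefficients produced by Theorem~\ref{thm:schub-schur}. In other words, $c^{w}_{u,v(\lambda,k)}$ counts the admissible tuples $\{a_{i,j}\}$ for which the index permutation $W D_0 U_0 \cdots D_n U_n$ equals $w$, where I write $W = u \times_{\ell} v(\lambda,\ell(\lambda))$ for the permutation called $w$ in Theorem~\ref{thm:schub-schur}. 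It remains to recognize each such index as the endpoint of a chain $u (a_1,b_1) \cdots (a_n,b_n)$ with $a_i \le k < b_i$ and $n = |\lambda|$.

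First I would settle the length bookkeeping. Since $W = u \times_{\ell} v(\lambda,\ell(\lambda))$ consists of two blocks supported on disjoint intervals of positions and values, its inversions split and $\ell(W) = \ell(u) + \ell(v(\lambda,\ell(\lambda))) = \ell(u) + |\lambda|$, using that a grassmannian permutation associated to $\lambda$ has exactly $|\lambda|$ inversions. As Theorem~\ref{thm:schub-schur} only retains terms with $\ell(W D_0 U_0 \cdots D_h U_h) = \ell(W)$ for every $h$, each surviving $w$ satisfies $\ell(w) = \ell(u) + |\lambda|$, which is precisely the length relation $\ell(w) = \ell(u) + n$ asserted in the statement.

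The heart of the argument is the cancellation-and-commutation procedure illustrated by the displayed example $421537968(7,9)(7,8)(5,7)(6,8)(6,8)(6,7)(3,6)(5,7) = u (5,6)(3,6)(5,7)$. I would push every down transposition (those lowering length, of the form $(m-i,\,\cdot\,)$) leftward past the up transpositions using the commutation relations $(a,k)(k,b) = (k,b)(a,b)$ and $(k,b)(k,b') = (k,b)(b,b')$ from the proof of Theorem~\ref{thm:trunc}, cancelling repeated factors such as $(6,8)(6,8)$ along the way. Once collected on the left, the accumulated down transpositions applied to $W$ undo the grassmannian insertion and return $u$ itself, exactly as $W(7,9)(7,8)(6,7) = u$ in the example. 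What survives is therefore a product $u (a_1,b_1)\cdots(a_n,b_n)$ of exactly $n = |\lambda|$ transpositions; the constraint $a_{i,j} < m-i \le \ell$ together with the block structure of $W$ then forces every lower index to satisfy $a_i \le k$ and every upper index to satisfy $b_i > k$.

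The main obstacle I anticipate is showing that this reordering is well defined and reversible, so that the correspondence from admissible tuples $\{a_{i,j}\}$ to chains is a bijection onto its image rather than many-to-one or ambiguous. This is exactly the place where one must check, as in Theorem~\ref{thm:trunc}, that each up transposition is crossed at most once by a non-commuting down transposition, so that the normalization terminates in a unique form and can be run backward to recover the tuple. It is this failure to hit \emph{every} chain of the stated shape---only the canonically reordered ones appear---that accounts for the word \emph{subset} in the statement, and verifying both which chains arise and that the count is preserved under normalization is the delicate remaining work.
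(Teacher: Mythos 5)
Your proposal takes essentially the same route as the paper: the paper's entire justification for this corollary is the single remark preceding it, namely that applying Theorem~\ref{thm:schub-schur} (via Corollary~\ref{cor:kohnert}) and then the canonical reordering of transpositions from the proof of Theorem~\ref{thm:trunc} converts each surviving term $w D_0 U_0 \cdots D_n U_n$ into a chain $u(a_1,b_1)\cdots(a_n,b_n)$ with $a_i \leq k < b_i$. Your write-up is in fact more careful than the paper's, since you make the length bookkeeping explicit and flag the reversibility of the normalization---a point the paper leaves entirely implicit by deferring to the bijection in the proof of Theorem~\ref{thm:trunc}.
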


%
%

\bibliographystyle{amsalpha} 
\bibliography{trunc.bib}

\end{document}